\documentclass[12pt]{article}
\usepackage[]{amsmath,amssymb}
\usepackage{amscd}
\usepackage{latexsym}
\usepackage{cite}
\usepackage{amsthm}

\newtheorem{definition}{Definition}[section]
\newtheorem{theorem}[definition]{Theorem}
\newtheorem{lemma}[definition]{Lemma}
\newtheorem{corollary}[definition]{Corollary}

\newtheorem{example}[definition]{Example}
\newtheorem{conjecture}[definition]{Conjecture}

\newtheorem{note}[definition]{Note}

\newtheorem{proposition}[definition]{Proposition}

\typeout{Substyle for letter-sized documents. Released 24 July 1992}


\setlength{\topmargin}{-1in}
\setlength{\headheight}{1.5cm}
\setlength{\headsep}{0.3cm}
\setlength{\textheight}{9in}
\setlength{\oddsidemargin}{0cm}
\setlength{\evensidemargin}{0cm}
\setlength{\textwidth}{6.5in}

\def\Z{\mathbb Z}
\def\F{\mathbb F}

\begin{document}
\title{\bf  
Tridiagonal pairs of $q$-Racah type, \\
the Bockting
operator $\psi$,
and \\
$L$-operators for 
$U_q(L({\mathfrak{sl}}_2))$
}
\author{
Paul Terwilliger}
\date{}

\maketitle
\begin{abstract}
We describe the Bockting operator $\psi$
for a tridiagonal pair of $q$-Racah type,
in terms of a certain $L$-operator for the quantum loop
algebra $U_q(L({\mathfrak {sl}}_2))$. 
\bigskip

\noindent
{\bf Keywords}. Bockting operator, tridiagonal pair, Leonard pair.
\hfil\break
\noindent {\bf 2010 Mathematics Subject Classification}. 
Primary: 17B37. Secondary: 15A21.
 \end{abstract}

\section{Introduction}
\noindent 
In the theory of quantum groups there exists the concept of
an 
$L$-operator; this
was introduced
in \cite{ks}  to 
 obtain solutions
for the Yang-Baxter equation.
In linear algebra there exists the concept of a tridiagonal pair;
this was introduced in
\cite{TD00}  to describe the irreducible modules
for the subconstituent algebra of a $Q$-polynomial distance-regular
graph. Recently some authors have connected the two concepts.
In 
\cite{bas2},
\cite{BK05}
Pascal Baseilhac and Kozo Koizumi use 
$L$-operators for the quantum loop algebra
$U_q(L(\mathfrak{sl}_2))$  to  construct a family
of finite-dimensional modules for the   
$q$-Onsager algebra
${\mathcal O}_q$; 
see
\cite{bas4,
basnc,
basXXZ,
 bas7}
for related work.
A finite-dimensional irreducible ${\mathcal O}_q$-module 
is essentially the same thing as a tridiagonal pair of
$q$-Racah type
\cite[Section~12]{bocktingTer},
\cite[Section~3]{TwoRel}.
In \cite[Section~9]{miki}, Kei Miki uses similar $L$-operators 
to describe
how
 $U_q(L(\mathfrak{sl}_2))$
is related to 
the $q$-tetrahedron algebra $\boxtimes_q$.
A finite-dimensional irreducible $\boxtimes_q$-module 
is essentially the same thing as a tridiagonal pair of $q$-geometric
type 
\cite[Theorem~2.7]{nonnil},
\cite[Theorems~10.3, 10.4]{qtet}.
Following Baseilhac, Koizumi, and Miki, in the present paper we use  $L$-operators
for 
 $U_q(L(\mathfrak{sl}_2))$
to describe the 
 Bockting operator $\psi$
associated with a tridiagonal pair of $q$-Racah type.
Before going into detail, we recall some notation and basic concepts.
Throughout this paper $\mathbb F$ denotes a field.
Let $V$ denote a vector space over $\mathbb F$ with finite
positive dimension. 
For an $\mathbb F$-linear map $A:V\to V$ and a subspace
$W \subseteq V$, we say that $W$ is an {\it eigenspace} of $A$
whenever $W \not=0$ and there exists $\theta \in \mathbb F$ such that
$W = \lbrace v \in V|Av=\theta v\rbrace$;  in this case
$\theta$ is called the {\it eigenvalue} of $A$ associated with $W$.
We say that $A$ is {\it diagonalizable} whenever $V$ is spanned
by the eigenspaces of $A$.
\begin{definition} 
\label{def:TDpair}
\rm (See \cite[Definition~1.1]{TD00}.)  
Let $V$ denote a vector space over $\mathbb F$ with finite
positive dimension. 
By a {\it tridiagonal pair} (or {\it TD pair}) on $V$ we mean
an ordered pair of $\mathbb F$-linear maps $A:V\to V$ and $A^*:V\to V$
that satisfy the following four conditions:
\begin{enumerate}
\item[\rm (i)] Each of $A,A^*$ is diagonalizable.
\item[\rm (ii)] There exists an ordering 
$\lbrace V_i\rbrace_{i=0}^d$ of the eigenspaces of $A$ such that
\begin{eqnarray}
\label{eq:AsAct}
A^* V_i \subseteq V_{i-1} + V_i + V_{i+1} \qquad \qquad (0 \leq i \leq d),
\end{eqnarray}
where $V_{-1} = 0$ and $V_{d+1} = 0$.
\item[\rm (iii)] There exists an ordering 
$\lbrace V^*_i\rbrace_{i=0}^\delta$ of the eigenspaces of $A^*$ such that
\begin{eqnarray}
\label{eq:AAct}
A V^*_i \subseteq V^*_{i-1} + V^*_i + V^*_{i+1} \qquad \qquad
(0 \leq i \leq \delta),
\end{eqnarray}
where $V^*_{-1} = 0$ and $V^*_{\delta+1} = 0$.
\item[\rm (iv)] There does not exist a subspace $W \subseteq V$ such
that
$A W \subseteq W$,
$A^* W \subseteq W$,
$W \not=0$,
$W \not=V$.
\end{enumerate}
\end{definition}
\noindent
We refer the reader to
\cite{TDclass,
TD00,
ITaug}
for background on TD pairs, and
here mention only a few essential points.
Let $A,A^*$ denote a TD pair on $V$, as in Definition
\ref{def:TDpair}.  By 
\cite[Lemma~4.5]{TD00}
the integers  $d$ and $\delta$ from
(\ref{eq:AsAct}) and
(\ref{eq:AAct}) are equal; we call this common value the
{\it diameter} of $A,A^*$.
An ordering of the eigenspaces for $A$ (resp. $A^*$) is
called {\it standard} whenever it satisfies
(\ref{eq:AsAct}) 
(resp. (\ref{eq:AAct})). Let
$\lbrace V_i \rbrace_{i=0}^d$ denote a standard ordering
of the eigenspaces of $A$.
By \cite[Lemma~2.4]{TD00}
the ordering
$\lbrace V_{d-i} \rbrace_{i=0}^d$ is standard and no further
ordering is standard. A similar result holds for the
eigenspaces of $A^*$.
Until the end of this section
fix a standard ordering
$\lbrace V_i \rbrace_{i=0}^d$ 
(resp. 
$\lbrace V^*_i \rbrace_{i=0}^d$) 
of the eigenspaces for $A$ (resp. $A^*$).
For $0 \leq i \leq d$ let $\theta_i $ (resp. $\theta^*_i$)
denote the eigenvalue of $A$ (resp. $A^*$) for the eigenspace
$V_i$ (resp. $V^*_i$). By construction
$\lbrace \theta_i \rbrace_{i=0}^d$ are mutually
distinct and contained in $\mathbb F$. Moreover
$\lbrace \theta^*_i \rbrace_{i=0}^d$ are mutually
distinct and contained in $\mathbb F$. By 
\cite[Theorem~11.1]{TD00}
the expressions
\begin{eqnarray*}
\frac{\theta_{i-2}-\theta_{i+1}}{\theta_{i-1}-\theta_i}
\qquad \qquad 
\frac{\theta^*_{i-2}-\theta^*_{i+1}}{\theta^*_{i-1}-\theta^*_i}
\end{eqnarray*}
are equal and independent of $i$ for $2 \leq i \leq d-1$.
For this constraint the solutions can be given in closed form
\cite[Theorem~11.2]{TD00}.
The ``most general'' solution is called $q$-Racah,
and will be described
shortly.
\medskip

\noindent We now recall the split decomposition 
\cite[Section~4]{TD00}.
 For $0 \leq i \leq d$
define
\begin{eqnarray*}
U_i = (V^*_0 + V^*_1 + \cdots + V^*_i)\cap (V_0 + V_1 + \cdots + V_{d-i}).
\end{eqnarray*}
For notational convenience define $U_{-1}=0$ and $U_{d+1}=0$.
By
\cite[Theorem~4.6]{TD00}
 the sum
$V = \sum_{i=0}^d U_i$ is direct. 
By
\cite[Theorem~4.6]{TD00}
 both
\begin{eqnarray*}
&&
U_0 + U_1 + \cdots + U_i = V^*_0 + V^*_1 + \cdots + V^*_i,
\\
&&
U_i + U_{i+1} + \cdots + U_d = V_0 + V_1 + \cdots + V_{d-i}
\end{eqnarray*}
for $0 \leq i \leq d$. By 
\cite[Theorem~4.6]{TD00}
 both
\begin{eqnarray}
(A-\theta_{d-i} I) U_i \subseteq U_{i+1},
\qquad \qquad 
(A^*-\theta^*_i I) U_i \subseteq U_{i-1}
\label{eq:AActU}
\end{eqnarray}
for $0 \leq i \leq d$.
\medskip

\noindent 
We now describe the $q$-Racah case.
Pick a nonzero $q \in \mathbb F$ such that
$q^4 \not=1$. 
We say that $A,A^*$ has {\it $q$-Racah type} whenever
there exist nonzero $a,b \in \mathbb F$ such that  both
\begin{eqnarray}
\theta_i = a q^{2i-d} + a^{-1} q^{d-2i}, 
\qquad \qquad
\theta^*_i = b q^{2i-d} + b^{-1} q^{d-2i} 
\label{eq:qracah}
\end{eqnarray}
for $0 \leq i \leq d$. For the rest of this section assume
that $A,A^*$ has $q$-Racah type. For $1 \leq i \leq d$
we have $q^{2i} \not=1$; otherwise $\theta_i = \theta_0$.
Define an $\mathbb F$-linear map
$K: V\to V$ such that for $0\leq i \leq d$, $U_i$ is an
eigenspace of $K$ with eigenvalue $q^{d-2i}$. Thus
\begin{eqnarray}
(K-q^{d-2i}I)U_i = 0 \qquad \qquad (0 \leq i \leq d),
\label{eq:K}
\end{eqnarray}
where $I:V\to V$ is the identity map.
Note that $K$ is invertible.
For $0 \leq i \leq d$ the following holds on $U_i$:
\begin{eqnarray}
\label{eq:thdmi}
a K + a^{-1} K^{-1} = \theta_{d-i} I.
\end{eqnarray}
Define an $\mathbb F$-linear map $R:V\to V$ such that
for $0 \leq i \leq d$, $R$ acts on $U_i$ as $A-\theta_{d-i}I$.
By (\ref{eq:thdmi}),
\begin{eqnarray}
\label{eq:AKR}
A = a K + a^{-1} K^{-1} + R.
\end{eqnarray}
By the equation on the left in (\ref{eq:AActU}),
\begin{eqnarray}
R U_i \subseteq U_{i+1} \qquad \qquad (0 \leq i \leq d).
\label{eq:Raise}
\end{eqnarray}
\noindent We now recall the Bockting operator $\psi$.
By 
\cite[Lemma~5.7]{bockting}
there exists a unique $\mathbb F$-linear map
$\psi: V\to V$ such that both
\begin{eqnarray}
&&\psi U_i \subseteq U_{i-1} \qquad \qquad (0 \leq i \leq d),
\label{eq:psiActU}
\\
&&\psi R - R \psi = (q-q^{-1})(K-K^{-1}).
\label{eq:psiVSR}
\end{eqnarray}
The known properties of $\psi$ are described in 
\cite{bockting1,
bockting,
bocktingTer}. 
Suppose we are given 
$A,A^*,R,K$ in matrix form, and wish to obtain
$\psi$ in matrix form. This can be
done using
(\ref{eq:Raise}),
(\ref{eq:psiActU}), 
(\ref{eq:psiVSR}) and induction on $i$.
The calculation can be tedious, so one desires
a more explicit description of
$\psi$.
In the present paper we give an explicit description
of 
 $\psi$, 
in terms of a certain  $L$-operator
for 
$U_q(L({\mathfrak {sl}}_2))$. According to this description, 
$\psi$ is equal to $-a$ times the ratio of two components for 
the $L$-operator.
Theorem 
\ref{thm:main}
is our main result.
\medskip

\noindent The paper is organized as follows.
In Section 2 we review the 
algebra
$U_q(L({\mathfrak {sl}}_2))$ in its Chevalley presentation.
In Section 3 we recall the
equitable presentation for
$U_q(L({\mathfrak {sl}}_2))$.
In Section 4
we discuss some $L$-operators for
$U_q(L({\mathfrak {sl}}_2))$.
In Section 5 we use these $L$-operators to describe $\psi$.

\section{The quantum loop algebra
$U_q(L({\mathfrak {sl}}_2))$}
Recall the integers $\mathbb Z=\lbrace 0, \pm 1, \pm 2, \ldots\rbrace$
and natural numbers
$\mathbb N = \lbrace 0,1,2,\ldots \rbrace$. We will be discussing
algebras. An algebra is meant to be associative and have a 1.
Recall the
field $\mathbb F$.  Until the end of Section 4,
fix a nonzero $q \in \mathbb F$ such that $q^2\not=1$. Define
\begin{eqnarray*}
\lbrack n \rbrack_q = \frac{q^n-q^{-n}}{q-q^{-1}}
\qquad \qquad n \in \mathbb Z.
\end{eqnarray*}
All tensor products are meant to be over $\mathbb F$.

\begin{definition}
\label{def:loop}
\rm (See \cite[Section 3.3]{cp3}.) Let 
$U_q(L({\mathfrak{sl}}_2))$ denote the
$\mathbb F$-algebra with generators $E_i, F_i, K^{\pm 1}_i$ $(i\in \lbrace 0,1\rbrace)$
and  relations
\begin{eqnarray*}
&&K_i K^{-1}_i = 1, \qquad \qquad \qquad K^{-1}_i K_i = 1,
\\
&&
K_0 K_1 = 1,  \qquad \qquad \qquad \quad K_1 K_0 = 1,
\\
&&
K_i E_i = q^2 E_i K_i, \qquad \qquad 
K_i F_i = q^{-2} F_i K_i,
\\
&&
K_i E_j = q^{-2} E_j K_i, \qquad \qquad 
K_i F_j = q^{2} F_j K_i, \qquad i\not=j,
\\
&&
E_i F_j-F_j E_i = \delta_{i,j} \frac{K_i - K_i^{-1}}{q-q^{-1}},
\\
&&
E^3_i E_j
- 
\lbrack 3 \rbrack_q   E^2_i E_j E_i
+
\lbrack 3 \rbrack_q   E_i E_j E^2_i
-
E_j E^3_i = 0,
\qquad \qquad i \not=j,
\\
&&
F^3_i F_j
- 
\lbrack 3 \rbrack_q   F^2_i F_j F_i
+
\lbrack 3 \rbrack_q   F_i F_j F^2_i
-
F_j F^3_i = 0,
\qquad \qquad i \not=j.
\end{eqnarray*}
We call 
$E_i,F_i, K^{\pm 1}_i$ the {\it Chevalley  generators} for
$U_q(L({\mathfrak{sl}}_2))$.
\end{definition}

\begin{lemma} 
\label{lem:loophopf}
{\rm (See \cite[p.~35]{jantzen}.)}
We turn
$U_q(L(\mathfrak{sl}_2))$ into a Hopf algebra as follows.
The coproduct $\Delta$ satisfies
\begin{eqnarray*}
&& \Delta(K_i) = K_i \otimes K_i, \qquad  \qquad \qquad 
\Delta(K^{-1}_i) = K^{-1}_i \otimes K^{-1}_i,
\\
&& \Delta(E_i) = E_i \otimes 1+ K_i \otimes E_i,
\qquad \qquad  \Delta(F_i) = 1 \otimes F_i+ F_i \otimes K^{-1}_i.
\end{eqnarray*}
The counit $\varepsilon$  satisfies
\begin{eqnarray*}
&&\varepsilon(K_i)=1, \qquad 
\varepsilon(K^{-1}_i)=1, \qquad 
\varepsilon(E_i)=0, \qquad 
\varepsilon(F_i)=0.
\end{eqnarray*}
The antipode $S$ satisfies
\begin{eqnarray*}
&& S(K_i)=K^{-1}_i, 
\qquad 
S(K^{-1}_i)=K_i,
\qquad
S(E_i) = -K^{-1}_i E_i,
\qquad
S(F_i)= -F_iK_i.
\end{eqnarray*}
\end{lemma}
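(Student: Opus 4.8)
The plan is to verify directly that the prescribed $\Delta$, $\varepsilon$, $S$ endow $U_q(L(\mathfrak{sl}_2))$ with the structure of a Hopf algebra, working from the generators-and-relations presentation of Definition \ref{def:loop}. Since $U_q(L(\mathfrak{sl}_2))$ is generated by the Chevalley generators, each of the three maps is determined by its values on those generators, and the content of the proof splits into two parts: first, show that $\Delta$ and $\varepsilon$ extend to algebra homomorphisms and that $S$ extends to an algebra anti-homomorphism; second, verify the coassociativity, counit, and antipode axioms. For the second part it suffices by (anti)multiplicativity to check each axiom on the generators, since in every case the subset of $U_q(L(\mathfrak{sl}_2))$ on which the relevant identity holds is a subalgebra.

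First I would establish that $\Delta$ is well defined as an algebra homomorphism $U_q(L(\mathfrak{sl}_2)) \to U_q(L(\mathfrak{sl}_2)) \otimes U_q(L(\mathfrak{sl}_2))$. By the universal property of the presentation this amounts to checking that the proposed images $\Delta(E_i)$, $\Delta(F_i)$, $\Delta(K_i^{\pm 1})$ satisfy every defining relation. The relations $K_iK_i^{-1}=1$, $K_i^{-1}K_i=1$, $K_0K_1=1$, $K_1K_0=1$ are immediate from $\Delta(K_i)=K_i\otimes K_i$. The commutation relations between the $K_i$ and the $E_j, F_j$ follow from a short computation in the tensor square, using the relations in each tensor factor together with the grouplike form of $\Delta(K_i)$. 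The mixed relation $E_iF_j-F_jE_i=\delta_{i,j}(K_i-K_i^{-1})/(q-q^{-1})$ is checked by expanding $\Delta(E_i)\Delta(F_j)-\Delta(F_j)\Delta(E_i)$ and collecting terms: the cross terms cancel precisely because of the $K$–$E$ and $K$–$F$ relations, and in the diagonal case the surviving terms reproduce $\Delta$ applied to the right-hand side.

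The main obstacle is the $q$-Serre relations: I must verify that the skew-primitive images $\Delta(E_i)=E_i\otimes 1+K_i\otimes E_i$, and likewise for the $F_i$, satisfy the cubic relations. Expanding $\Delta(E_i)^3\Delta(E_j)-[3]_q\Delta(E_i)^2\Delta(E_j)\Delta(E_i)+[3]_q\Delta(E_i)\Delta(E_j)\Delta(E_i)^2-\Delta(E_j)\Delta(E_i)^3$ in the tensor square produces many terms, each a tensor product of a monomial in the first factor with a monomial in the second. After normal-ordering with the relations $K_iE_i=q^2E_iK_i$ and $K_iE_j=q^{-2}E_jK_i$, these terms organize into tensor products in which one factor is itself the left-hand side of a $q$-Serre relation, hence vanishes, while the remaining coefficients cancel via $q$-binomial identities. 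This is the standard Lusztig-type computation showing the quantum Serre relations are compatible with the coproduct; it is routine but carries the heaviest bookkeeping of the argument. The same reasoning applies to the $F_i$.

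Finally I would dispatch the remaining axioms. Coassociativity $(\Delta\otimes\mathrm{id})\Delta=(\mathrm{id}\otimes\Delta)\Delta$ and the counit identities $(\varepsilon\otimes\mathrm{id})\Delta=\mathrm{id}=(\mathrm{id}\otimes\varepsilon)\Delta$ are verified on generators by direct substitution; for instance both sides of coassociativity applied to $E_i$ give $E_i\otimes 1\otimes 1+K_i\otimes E_i\otimes 1+K_i\otimes K_i\otimes E_i$. For the antipode I would first check that $S$ respects the defining relations with the order of all products reversed, so that $S$ is a well-defined algebra anti-homomorphism, the Serre relations again being the only nontrivial check. I then verify the convolution-inverse property $m(S\otimes\mathrm{id})\Delta=\eta\varepsilon=m(\mathrm{id}\otimes S)\Delta$ on each generator, where $m$ is multiplication and $\eta$ the unit; for example on $E_i$ one computes $S(E_i)1+S(K_i)E_i=-K_i^{-1}E_i+K_i^{-1}E_i=0=\varepsilon(E_i)1$, and symmetrically for the other side. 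Since each axiom holds on generators and the maps involved are (anti)homomorphisms compatible with convolution, the axioms hold on all of $U_q(L(\mathfrak{sl}_2))$, completing the verification.
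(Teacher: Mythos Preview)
Your proposal is correct and is the standard direct verification of the Hopf algebra axioms on generators. The paper itself gives no proof of this lemma: it is stated with a citation to Jantzen's book and left at that, so there is nothing in the paper to compare your argument against beyond noting that your sketch is exactly the kind of computation the cited reference carries out.
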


\noindent We now discuss the
$U_q(L(\mathfrak{sl}_2))$-modules.

\begin{lemma}
\label{lem:irred}
{\rm (See \cite[Section~4]{cp3}.)}
There exists a family
of 
$U_q(L(\mathfrak{sl}_2))$-modules
\begin{eqnarray}
{\bf V}(d,t) \qquad \qquad 0 \not=d \in \mathbb N, \qquad \qquad
0 \not= t \in \mathbb F
\label{eq:vde}
\end{eqnarray}
with this property: ${\bf V}(d,t)$ has a basis
$\lbrace v_i \rbrace_{i=0}^d$ such that
\begin{eqnarray*}
&&K_1 v_i = q^{d-2i}v_i \qquad \qquad (0 \leq  i \leq d),
\\
&& E_1 v_i =  \lbrack d-i+1 \rbrack_q v_{i-1}
\qquad \qquad (1 \leq i \leq d), \qquad
E_1 v_0 = 0,
\\
&& F_1 v_i = \lbrack i+1 \rbrack_q v_{i+1}
\qquad \qquad (0 \leq i \leq d-1), \qquad
F_1 v_d = 0,
\\
&&K_0 v_i = q^{2i-d}v_i \qquad \qquad (0 \leq  i \leq d),
\\
&& E_0 v_i =  t\lbrack i+1 \rbrack_q v_{i+1}
\qquad \qquad (0 \leq i \leq d-1), \qquad
E_0 v_d = 0,
\\
&& F_0 v_i = t^{-1}\lbrack d-i+1 \rbrack_q v_{i-1}
\qquad \qquad (1 \leq i \leq d), \qquad
F_0 v_0 = 0.
\end{eqnarray*}
The 
module ${\bf V}(d,t)$ is irreducible provided that
$q^{2i} \not=1$ for $1 \leq i \leq d$.
\end{lemma}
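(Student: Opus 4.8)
The plan is to take the vector space $V$ with basis $\lbrace v_i\rbrace_{i=0}^d$, define $\mathbb F$-linear maps $E_i,F_i,K_i^{\pm 1}$ by the displayed formulas, and verify that these maps satisfy every defining relation of $U_q(L(\mathfrak{sl}_2))$ from Definition \ref{def:loop}; this exhibits $V$ as the asserted module ${\bf V}(d,t)$. The organizing observation is that $E_1,F_1,K_1$ act exactly as the standard generators $e,f,k$ of $U_q(\mathfrak{sl}_2)$ on its $(d+1)$-dimensional irreducible module $W$, while $E_0,F_0,K_0$ act on the same space as $tf$, $t^{-1}e$, $k^{-1}$ respectively. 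Thus I expect ${\bf V}(d,t)$ to be the pullback of $W$ along the evaluation homomorphism $E_1\mapsto e$, $F_1\mapsto f$, $K_1\mapsto k$, $E_0\mapsto tf$, $F_0\mapsto t^{-1}e$, $K_0\mapsto k^{-1}$, and I would check the relations directly on the basis (equivalently, that this assignment respects them).

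First I would dispose of the routine relations. The equations $K_iK_i^{-1}=1$ and $K_0K_1=1$ are immediate from $K_1v_i=q^{d-2i}v_i$ and $K_0v_i=q^{2i-d}v_i$, and the commutation relations between the $K$'s and the $E$'s, $F$'s reduce to one-line comparisons of the scalar $q^{d-2i}$ before and after a raise or lower of the index. For $E_iF_j-F_jE_i$ with $i=j$ the computation collapses, via the $q$-integer identity $\lbrack i+1\rbrack_q\lbrack d-i\rbrack_q-\lbrack i\rbrack_q\lbrack d-i+1\rbrack_q=\lbrack d-2i\rbrack_q$, to $\lbrack d-2i\rbrack_q v_i=(K_i-K_i^{-1})(q-q^{-1})^{-1}v_i$ (and its analogue with $K_0$); for $i\not=j$ the composites $E_1F_0$ and $F_0E_1$ (resp. $E_0F_1$ and $F_1E_0$) each shift the index by two positions, and a short scalar computation shows they agree, giving $E_iF_j=F_jE_i$.

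The main obstacle is the pair of cubic $q$-Serre relations. Under the identification above, all the $E$- and $F$-Serre relations reduce to the single identity $X:=e^3f-\lbrack 3\rbrack_q e^2fe+\lbrack 3\rbrack_q efe^2-fe^3=0$ in $U_q(\mathfrak{sl}_2)$ together with its $e\leftrightarrow f$ dual. To prove $X=0$ I would normal-order it: bringing each $f$ to the right using $fe=ef-h$, where $h=(k-k^{-1})(q-q^{-1})^{-1}$, the four $e^3f$ contributions cancel because $1-\lbrack 3\rbrack_q+\lbrack 3\rbrack_q-1=0$, leaving $X=e^2h-(q^2+q^{-2})\,ehe+he^2$. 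Using $ek^{\pm 1}=q^{\mp 2}k^{\pm 1}e$ one rewrites $e^2h$, $ehe$, $he^2$ as $e^2$ times $(k-k^{-1})$, $(q^2k-q^{-2}k^{-1})$, $(q^4k-q^{-4}k^{-1})$ over $q-q^{-1}$, and the coefficients of $k$ and of $k^{-1}$ in the resulting bracket both vanish identically; this is precisely where the weight $\lbrack 3\rbrack_q=q^2+1+q^{-2}$ is calibrated. Carrying the same computation out directly on the basis $\lbrace v_i\rbrace$, rather than in the abstract algebra, amounts to the identical $q$-integer recurrences, and I expect this bookkeeping to be the most delicate part.

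Finally, for irreducibility assume $q^{2i}\not=1$ for $1\le i\le d$. Then the eigenvalues $q^{d-2i}$ $(0\le i\le d)$ of $K_1$ are mutually distinct, so each $v_i$ spans a one-dimensional $K_1$-eigenspace. Any nonzero submodule $W$ is $K_1$-invariant, hence spanned by a subset of $\lbrace v_i\rbrace$, and in particular contains some $v_j$. The same hypothesis forces $\lbrack m\rbrack_q\not=0$ for $1\le m\le d$, so $E_1$ and $F_1$ act on the $v_i$ with nonzero scalars; applying them repeatedly to $v_j$ produces every basis vector, whence $W=V$. Thus ${\bf V}(d,t)$ is irreducible, and this last step uses only the embedded $U_q(\mathfrak{sl}_2)$ action of $E_1,F_1,K_1$.
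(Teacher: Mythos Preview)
Your proof is correct. The paper does not supply its own proof of this lemma; it simply records the result with a citation to Chari--Pressley, so there is no ``paper's approach'' to compare against in detail. Your strategy---pulling back the irreducible $U_q(\mathfrak{sl}_2)$-module along the evaluation homomorphism $E_1\mapsto e$, $F_1\mapsto f$, $K_1\mapsto k$, $E_0\mapsto tf$, $F_0\mapsto t^{-1}e$, $K_0\mapsto k^{-1}$ and checking the relations---is exactly the standard construction of evaluation modules, and your verification of the $q$-Serre relations via the identity $e^3f-\lbrack 3\rbrack_q e^2fe+\lbrack 3\rbrack_q efe^2-fe^3=0$ in $U_q(\mathfrak{sl}_2)$ is clean and correct (the normal-ordering computation you outline does give zero). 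The irreducibility argument is also fine: distinctness of the $K_1$-eigenvalues and nonvanishing of $\lbrack m\rbrack_q$ for $1\le m\le d$ both follow from the hypothesis $q^{2i}\ne 1$, and the rest is the usual highest-weight argument.
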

\begin{definition}\rm 
Referring to Lemma
\ref{lem:irred},
we call 
${\bf V}(d,t)$
 an {\it evaluation module}
 for 
$U_q(L(\mathfrak{sl}_2))$.
We call $d$ the
{\it diameter}.
We call
$t$ the {\it evaluation parameter}.
\end{definition}

\begin{example}\rm For $0 \not=t \in \mathbb F$ the 
$U_q(L(\mathfrak{sl}_2))$-module
${\bf V}(1,t)$ is described as follows. With
respect to the basis $v_0,v_1$ from Lemma
\ref{lem:irred}, the matrices representing the
Chevalley generators are
\begin{eqnarray*}
&&
E_1:\quad  
\left(
\begin{array}{ c c}
 0 & 1 \\
 0   & 0 
       \end{array} 
       \right),
\qquad \qquad 
F_1:\quad  
\left(
\begin{array}{ c c}
 0 & 0 \\
 1   & 0 
       \end{array}
       \right),
\qquad \qquad 
K_1:\quad  
\left(
\begin{array}{ c c}
 q & 0 \\
0   & q^{-1} 
       \end{array}
\right),
 \\
&&
E_0:\quad  
\left(
\begin{array}{ c c}
 0 & 0 \\
 t   & 0 
       \end{array}
 \right),
\qquad \qquad 
F_0:\quad  
\left(
\begin{array}{ c c}
 0 & t^{-1} \\
 0   & 0
       \end{array}
 \right),
\qquad \qquad 
K_0:\quad  
\left(
\begin{array}{ c c}
 q^{-1} & 0 \\
0   & q
       \end{array}
\right).
\end{eqnarray*}
\end{example}

\begin{lemma} 
\label{lem:UotimesV}
{\rm (See \cite[p.~58]{kassel}.)}
Let $U$ and $V$ denote
$U_q(L(\mathfrak{sl}_2))$-modules.
Then $U\otimes V$ becomes a 
$U_q(L(\mathfrak{sl}_2))$-module as follows.
For $u \in U$ and $v \in V$,
\begin{eqnarray*}
&&
K_i (u \otimes v) = K_i(u) \otimes K_i(v),
\\
&&
K^{-1}_i (u \otimes v) = K^{-1}_i(u) \otimes K^{-1}_i(v),
\\
&&
E_i (u \otimes v) = E_i(u) \otimes v+ K_i(u) \otimes E_i(v),
\\
&&F_i (u \otimes v) = u \otimes F_i( v)+ F_i(u) \otimes K^{-1}_i(v).
\end{eqnarray*}
\end{lemma}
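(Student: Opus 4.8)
The plan is to recognize Lemma~\ref{lem:UotimesV} as an instance of the standard construction of a tensor product of modules over a bialgebra, with the module structure read off from the coproduct $\Delta$ already displayed in Lemma~\ref{lem:loophopf}. The point is that the four formulas in the statement are nothing more than $g \cdot (u \otimes v) = \Delta(g)(u \otimes v)$ for $g$ ranging over the Chevalley generators.

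First I would recall the general fact: for any bialgebra $A$ with coproduct $\Delta \colon A \to A \otimes A$ and any two $A$-modules $U$ and $V$, the vector space $U \otimes V$ becomes an $A$-module via $a \cdot x = \Delta(a)\,x$ for $a \in A$ and $x \in U \otimes V$. That this defines a genuine module structure rests on two features of a bialgebra: $\Delta$ is an algebra homomorphism, which yields $(ab)\cdot x = a \cdot (b \cdot x)$, and $\Delta$ is counital, which yields $1 \cdot x = x$. Lemma~\ref{lem:loophopf} asserts that $U_q(L(\mathfrak{sl}_2))$ is a Hopf algebra, hence in particular a bialgebra, so both features are available and no separate verification is needed.

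Next I would substitute the coproduct formulas from Lemma~\ref{lem:loophopf} into $g \cdot (u \otimes v) = \Delta(g)(u \otimes v)$ generator by generator. From $\Delta(K_i) = K_i \otimes K_i$ and $\Delta(K_i^{-1}) = K_i^{-1} \otimes K_i^{-1}$ one reads off the first two lines of the statement; from $\Delta(E_i) = E_i \otimes 1 + K_i \otimes E_i$ one reads off the third line; and from $\Delta(F_i) = 1 \otimes F_i + F_i \otimes K_i^{-1}$ one reads off the fourth. Thus the displayed actions coincide with the action of $\Delta(g)$, completing the identification.

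The only genuine work, were one to avoid citing the Hopf structure as a black box, would be to check directly that the proposed actions respect every defining relation of $U_q(L(\mathfrak{sl}_2))$ from Definition~\ref{def:loop}, most notably the two $q$-Serre relations among the $E_i$ and among the $F_i$. This is exactly the laborious step that the homomorphism property of $\Delta$ subsumes, and I expect it to be the main obstacle for a from-scratch argument; since the paper has already established that $\Delta$ makes $U_q(L(\mathfrak{sl}_2))$ a Hopf algebra, I would invoke that fact rather than re-derive the relations on $U \otimes V$.
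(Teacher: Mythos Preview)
Your proposal is correct and matches the paper's treatment: the paper gives no proof of its own for this lemma, merely citing \cite[p.~58]{kassel}, which is precisely the general bialgebra/Hopf-algebra tensor-product construction you describe. Your reading of the formulas as $\Delta(g)(u\otimes v)$ for the Chevalley generators, with the module axioms following from $\Delta$ being a unital algebra homomorphism (Lemma~\ref{lem:loophopf}), is exactly the intended argument.
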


\begin{definition}
\label{def:triv}
\rm (See \cite[p.~110]{cp2}.)
Up to isomorphism, there exists a  unique
$U_q(L(\mathfrak{sl}_2))$-module of dimension 1 on which
each $u \in 
U_q(L(\mathfrak{sl}_2))$ acts as $\varepsilon(u)I$,
where $\varepsilon$ is from
Lemma \ref{lem:loophopf}.
This 
$U_q(L(\mathfrak{sl}_2))$-module is said to be
{\it trivial}.
\end{definition}

\begin{proposition}
\label{lem:CP}
{\rm (See \cite[Theorem~3.2]{miki}.)}
Assume that $\mathbb F$ is algebraically closed
with characteristic zero, and $q$ is not a root of unity.
Let $V$ denote a nontrivial finite-dimensional irreducible
$U_q(L(\mathfrak{sl}_2))$-module on which each eigenvalue of
$K_1$ is an integral power of $q$. 
Then $V$ is isomorphic to a tensor product
of evaluation
$U_q(L(\mathfrak{sl}_2))$-modules.
\end{proposition}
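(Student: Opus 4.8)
The plan is to reduce this to the Chari--Pressley classification of the finite-dimensional irreducible $U_q(L(\mathfrak{sl}_2))$-modules in terms of Drinfeld polynomials, and then to factor the relevant polynomial over $\mathbb F$. Throughout, the hypotheses that $\mathbb F$ has characteristic zero and $q$ is not a root of unity are used to guarantee $\lbrack n\rbrack_q\not=0$ for $n\not=0$, so that the generic representation theory applies and the evaluation modules of Lemma \ref{lem:irred} are irreducible; algebraic closedness will be used to split a polynomial; and the condition that every eigenvalue of $K_1$ is an integral power of $q$ will be used to restrict to type $1$.

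First I would pass to the Drinfeld (loop) presentation of $U_q(L(\mathfrak{sl}_2))$, with its Cartan-type generators and loop raising and lowering generators $x^{\pm}_k$. Since $V$ is finite-dimensional and $q$ is not a root of unity, the commuting Cartan part acts semisimply, and one can select a weight vector $v\in V$ of extremal weight that is annihilated by all the loop raising operators and is a common eigenvector for the Cartan part; this is a \emph{pseudo-highest weight} vector. Because $V$ is irreducible it is generated by $v$, so $V$ is a pseudo-highest weight module. The pseudo-highest weight is recorded by scalar data which, precisely because $V$ is of type $1$ (this is where the integral-power condition on $K_1$ enters, ruling out the sign twists), assembles into a single Drinfeld polynomial $P(u)\in\mathbb F[u]$ with $P(0)=1$ and $\deg P$ equal to the top $K_1$-weight. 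The assignment $V\mapsto P$ is a bijection onto such polynomials, so $V$ is determined up to isomorphism by $P$; nontriviality of $V$ forces $P\not=1$.

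Next, since $\mathbb F$ is algebraically closed, $P$ splits into linear factors, so its roots form a finite nonempty multiset $S$ of nonzero scalars. The combinatorial core of the argument is to partition $S$ into $q^2$-strings, that is, segments of the form $\{t q^{\,d-1}, t q^{\,d-3}, \ldots, t q^{\,1-d}\}$, that are pairwise in \emph{general position}; a greedy merging argument shows such a partition exists and is unique up to order. Each string of length $d_j$ with center parameter $t_j$ is exactly the root data of an evaluation module $\mathbf V(d_j,t_j)$ from Lemma \ref{lem:irred}, whose Drinfeld polynomial is the corresponding string factor. Using the coproduct of Lemma \ref{lem:loophopf} and the tensor product rule of Lemma \ref{lem:UotimesV}, I would then order these factors so that $W=\mathbf V(d_1,t_1)\otimes\cdots\otimes\mathbf V(d_n,t_n)$ is a pseudo-highest weight module whose Drinfeld polynomial is $\prod_j P_j=P$.

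It remains to see that $W$ is irreducible: the general-position property of the strings guarantees this by the Chari--Pressley tensor product criterion, proved either by analyzing the decomposition of tensor products of loop-$\mathfrak{sl}_2$ modules or, equivalently, via nondegeneracy of the relevant $R$-matrix intertwiner. Then $W$ is an irreducible type $1$ module with Drinfeld polynomial $P$, so $W\cong V$ by the classification, which is the desired conclusion. The main obstacle is exactly this last step, namely the irreducibility of a tensor product of evaluation modules whose strings are in general position, together with the uniqueness of the string decomposition; by contrast, setting up the pseudo-highest weight structure and the bijection with Drinfeld polynomials is routine once the Drinfeld presentation is available.
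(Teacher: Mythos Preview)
The paper does not supply its own proof of this proposition: it is quoted from the literature (Miki \cite[Theorem~3.2]{miki}, ultimately going back to Chari--Pressley), so there is nothing in the paper to compare your argument against. Your sketch is the standard route---pseudo-highest-weight theory in the Drinfeld presentation, the bijection with Drinfeld polynomials for type~$1$ modules, factoring over the algebraically closed field into $q^2$-strings, and then invoking the Chari--Pressley criterion for irreducibility of the tensor product---and it is correct in outline; this is precisely how the cited result is established.
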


\section{The equitable presentation for 
$U_q(L(\mathfrak{sl}_2))$}

\noindent In this section we recall the
equitable presentation for
$U_q(L(\mathfrak{sl}_2))$.
Let ${\mathbb Z}_4 = {\mathbb Z}/4 {\mathbb Z}$ denote
the  cyclic group of order 4. In a moment we will
discuss some  objects $X_{ij}$. The subscripts
$i,j$ are meant to be in
 ${\mathbb Z}_4$.

\begin{lemma} 
\label{lem:equitLoop}
{\rm
(See
 \cite[Theorem~2.1]{uqsl2hat},
\cite[Proposition~4.2]{miki}.)}
The algebra
$U_q(L(\mathfrak{sl}_2))$ has a presentation
by generators 
\begin{eqnarray}
\label{eq:equitloop}
X_{01}, \quad
X_{12}, \quad
X_{23}, \quad
X_{30}, \quad
X_{13}, \quad
X_{31} 
\end{eqnarray}
and the following relations:
\begin{eqnarray*}
&&X_{13} X_{31} = 1, \quad 
 X_{31} X_{13} = 1,
\quad
\frac{qX_{01}X_{12}-q^{-1}X_{12}X_{01}}{q-q^{-1}}=1,
\quad
\frac{qX_{12}X_{23}-q^{-1}X_{23}X_{12}}{q-q^{-1}}=1,
\\
&&\frac{qX_{23}X_{30}-q^{-1}X_{30}X_{23}}{q-q^{-1}}=1,
\quad 
\frac{qX_{30}X_{01}-q^{-1}X_{01}X_{30}}{q-q^{-1}}=1,
\quad
\frac{qX_{01}X_{13}-q^{-1}X_{13}X_{01}}{q-q^{-1}}=1,
\\
&&\frac{qX_{31}X_{12}-q^{-1}X_{12}X_{31}}{q-q^{-1}}=1,
\quad
\frac{qX_{23}X_{31}-q^{-1}X_{31}X_{23}}{q-q^{-1}}=1,
\quad 
\frac{qX_{13}X_{30}-q^{-1}X_{30}X_{13}}{q-q^{-1}}=1,
\\
&&
 X_{i,i+1}^3 X_{i+2,i+3}
-
\lbrack 3 \rbrack_q
X_{i,i+1}^2 X_{i+2,i+3} X_{i,i+1}
+
\lbrack 3 \rbrack_q
X_{i,i+1} X_{i+2,i+3} X^2_{i,i+1}
-
X_{i+2,i+3} X^3_{i,i+1} =0.
\end{eqnarray*}
An isomorphism with the presentation in
Definition 
\ref{def:loop}
sends
\begin{eqnarray*}
&&X_{01} \mapsto K_0 + q(q-q^{-1})K_0 F_0,
\qquad \qquad
X_{12} \mapsto K_1 - (q-q^{-1})E_1,
\\
&&X_{23} \mapsto K_1 + q(q-q^{-1})K_1 F_1,
\qquad \qquad
X_{30} \mapsto K_0 - (q-q^{-1})E_0,
\\
&&
X_{13} \mapsto K_1,
\qquad \qquad
X_{31} \mapsto K_0.
\end{eqnarray*}
The inverse isomorphism sends
\begin{eqnarray*}
&& E_1 \mapsto (X_{13}-X_{12})(q-q^{-1})^{-1},
 \qquad
 E_0 \mapsto (X_{31}-X_{30})(q-q^{-1})^{-1},
\\
&&
F_1 \mapsto (X_{31} X_{23}-1)q^{-1} (q-q^{-1})^{-1},
\qquad
 F_0 \mapsto (X_{13} X_{01}-1)q^{-1} (q-q^{-1})^{-1},
\\
&&
K_1 \mapsto X_{13},
\qquad \qquad  K_0 \mapsto X_{31}.
\end{eqnarray*}
\end{lemma}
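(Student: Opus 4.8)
The plan is to establish the presentation by exhibiting two mutually inverse $\mathbb F$-algebra homomorphisms. Let $U$ denote the algebra of Definition \ref{def:loop} (the Chevalley presentation) and let $\mathcal E$ denote the algebra on the generators \eqref{eq:equitloop} subject to the listed equitable relations. Define $\phi:\mathcal E\to U$ to send each $X_{ij}$ to the displayed Chevalley expression, and define $\psi:U\to\mathcal E$ to send each $E_i,F_i,K_i^{\pm1}$ to the displayed equitable expression. By the universal property of an algebra presented by generators and relations, to see that $\phi$ (resp.\ $\psi$) is a well-defined homomorphism it suffices to verify that the proposed images of the generators satisfy every defining relation of the target. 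Once both maps are known to be well-defined, it remains only to check that $\phi\circ\psi$ and $\psi\circ\phi$ fix each generator.

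First I would verify that $\phi$ is well-defined, i.e.\ that the Chevalley-side images satisfy the equitable relations. The relations $X_{13}X_{31}=1$ and $X_{31}X_{13}=1$ are immediate from $\phi(X_{13})\phi(X_{31})=K_1K_0=1$ and $K_0K_1=1$. Each of the eight $q$-commutator relations, such as $\tfrac{qX_{01}X_{12}-q^{-1}X_{12}X_{01}}{q-q^{-1}}=1$, is checked by substituting the displayed expressions and simplifying with the $K$--$E$ and $K$--$F$ commutation relations together with $E_iF_j-F_jE_i=\delta_{i,j}(K_i-K_i^{-1})/(q-q^{-1})$: for the relations whose two factors carry the same index the nonzero term $E_iF_i-F_iE_i$ combines with the $K$-terms to supply exactly the constant $1$, while for the remaining pairs the relevant $E$ and $F$ commute. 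The genuinely laborious family is the cubic relation. Under $\phi$ the pair $(X_{12},X_{30})$ maps into expressions built from $E_0,E_1,K_0,K_1$ and the pair $(X_{01},X_{23})$ into expressions built from $F_0,F_1,K_0,K_1$, so after clearing the invertible $K$-factors and repeatedly applying the commutation relations, the cubic equation reduces to the quantum Serre relation for the $E_i$ in the first case and for the $F_i$ in the second.

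Symmetrically, I would verify that $\psi$ is well-defined, i.e.\ that the equitable-side images satisfy the Chevalley relations. The relations among the $K_i$ follow from $\psi(K_1)\psi(K_0)=X_{13}X_{31}=1$ (so that $K_i^{-1}$ maps consistently to $X_{31}$ or $X_{13}$); the $K$--$E$, $K$--$F$ relations and the $E_iF_j-F_jE_i$ relation follow from the $q$-commutator relations among the $X_{ij}$ after substitution; and the two Serre relations follow from the cubic equitable relations, here using also $X_{13}X_{31}=1$ and the $q$-commutator relations to eliminate the factors $X_{13},X_{31}$ introduced by $\psi(E_i),\psi(F_i)$. As in the other direction, the Serre relations are where the bulk of the computation lies.

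Finally I would confirm that $\phi$ and $\psi$ are mutually inverse by evaluating on generators. For instance
\[
\phi(\psi(E_1))=\bigl(\phi(X_{13})-\phi(X_{12})\bigr)(q-q^{-1})^{-1}=\bigl(K_1-(K_1-(q-q^{-1})E_1)\bigr)(q-q^{-1})^{-1}=E_1,
\]
and, using $q(q-q^{-1})\cdot q^{-1}(q-q^{-1})^{-1}=1$ together with $X_{31}X_{13}=1$,
\[
\psi(\phi(X_{01}))=X_{31}+X_{31}(X_{13}X_{01}-1)=X_{31}X_{13}X_{01}=X_{01};
\]
the remaining generators are handled the same way. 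Since the generators generate each algebra, the composites are the identity, so $\phi$ and $\psi$ are inverse isomorphisms and the equitable relations do give a presentation of $U_q(L(\mathfrak{sl}_2))$. I expect the reduction of the cubic relations to the quantum Serre relations, in both directions, to be the main obstacle.
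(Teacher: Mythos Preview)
Your approach is correct and is the standard one: exhibit the two candidate maps, verify each is well-defined by checking the defining relations, and confirm they are mutually inverse on generators. There is no gap in the strategy, and your identification of the cubic/Serre reduction as the only substantive computation is accurate.

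The paper itself does not prove this lemma; it is stated with a citation to \cite[Theorem~2.1]{uqsl2hat} and \cite[Proposition~4.2]{miki} and no argument is given. So there is no in-paper proof to compare against. Your outline is precisely the argument one finds in those references (modulo the loop-algebra specialization $K_0K_1=1$), and would serve as a self-contained proof were one required here.
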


\begin{note}
\label{note:ident}
\rm For notational convenience, we identify
the copy of 
$U_q(L(\mathfrak{sl}_2))$ given in 
Definition
\ref{def:loop}
with the copy given in
Lemma
\ref{lem:equitLoop}, via the isomorphism given in
Lemma
\ref{lem:equitLoop}.
\end{note}

\begin{definition}
\label{def:equitLoopgen}
\rm Referring to Lemma
\ref{lem:equitLoop}, we call the generators
(\ref{eq:equitloop}) the 
{\it equitable generators} for 
$U_q(L(\mathfrak{sl}_2))$.
\end{definition}

\begin{lemma}
{\rm 
(See \cite[Theorem~3.4]{tersym}.)}
From the equitable point
of view the Hopf algebra
$U_q(L(\mathfrak{sl}_2))$ looks as follows. The coproduct 
$\Delta$ satisfies
\begin{eqnarray*}
&&\Delta(X_{13}) = X_{13}\otimes X_{13}, \qquad \qquad
\Delta(X_{31}) = X_{31}\otimes X_{31},
\\
&&\Delta(X_{01})= 
 (X_{01}-X_{31})\otimes 1 + X_{31} \otimes X_{01}, \qquad
\Delta(X_{12})= 
 (X_{12}-X_{13})\otimes 1 + X_{13} \otimes X_{12},
\\
&&\Delta(X_{23})= 
 (X_{23}-X_{13})\otimes 1 + X_{13} \otimes X_{23}, \qquad
\Delta(X_{30})= 
 (X_{30}-X_{31})\otimes 1 + X_{31} \otimes X_{30}.
\end{eqnarray*}
\noindent The counit $\varepsilon$ satisfies
\begin{eqnarray*}
&&
\varepsilon(X_{13})=1,\qquad \qquad 
\varepsilon(X_{31})=1,\qquad \qquad 
\varepsilon(X_{01})=1,
\\
&&\varepsilon(X_{12})=1, \qquad \qquad
\varepsilon(X_{23})=1, \qquad \qquad
\varepsilon(X_{30})=1.
\end{eqnarray*}
\noindent The antipode $S$ satisfies
\begin{eqnarray*}
&&
S(X_{31})=X_{13}, \qquad \qquad
S(X_{13})=X_{31}, \\
&&
S(X_{01}) = 1+  X_{13}- X_{13}X_{01}, \qquad
S(X_{12}) = 1+X_{31}-X_{31}X_{12}, \\
&&
S(X_{23}) = 1+  X_{31}- X_{31}X_{23}, \qquad
S(X_{30}) = 1+X_{13}-X_{13}X_{30}.
\end{eqnarray*}
\end{lemma}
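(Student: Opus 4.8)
The plan is to derive everything from the Hopf algebra structure already recorded on the Chevalley generators in Lemma~\ref{lem:loophopf}, together with the explicit isomorphism between the two presentations in Lemma~\ref{lem:equitLoop}. Since $\Delta$, $\varepsilon$, $S$ are already known to exist and to be (anti)homomorphisms, nothing about well-definedness needs checking; the only task is to evaluate each map on the six equitable generators. To do this I would substitute the Chevalley expression for each $X_{ij}$ from the forward map of Lemma~\ref{lem:equitLoop}, apply the relevant Hopf operation, and then recognize the answer as a polynomial in the equitable generators using the inverse map (or by direct resubstitution).

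For the coproduct, the grouplike cases $\Delta(X_{13}) = \Delta(K_1) = K_1 \otimes K_1$ and $\Delta(X_{31}) = \Delta(K_0) = K_0 \otimes K_0$ are immediate. For the remaining four generators the clean step is to note that each difference $X_{12} - X_{13}$, $X_{23} - X_{13}$, $X_{01} - X_{31}$, $X_{30} - X_{31}$ is a scalar multiple of a single Chevalley expression, namely $E_1$, $K_1 F_1$, $K_0 F_0$, $E_0$ respectively. For instance $X_{12} - X_{13} = -(q-q^{-1})E_1$, so applying $\Delta$ as an algebra homomorphism to $X_{12} = K_1 - (q-q^{-1})E_1$ and using $\Delta(E_1) = E_1 \otimes 1 + K_1 \otimes E_1$ gives $\Delta(X_{12}) = K_1 \otimes K_1 - (q-q^{-1})(E_1 \otimes 1 + K_1 \otimes E_1)$, which regroups exactly as $(X_{12}-X_{13}) \otimes 1 + X_{13} \otimes X_{12}$. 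The same bookkeeping, now with $\Delta(F_0) = 1 \otimes F_0 + F_0 \otimes K_0^{-1}$ and the relation $K_0 K_0^{-1} = 1$, handles $X_{01}$, and the two remaining generators follow identically (one by the $E$-type computation for $X_{12}$, one by the $F$-type computation for $X_{01}$).

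The counit is the easiest part: $\varepsilon$ is an algebra homomorphism with $\varepsilon(K_i) = 1$ and $\varepsilon(E_i) = \varepsilon(F_i) = 0$, so each equitable generator is sent to $1$.

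The antipode is where I expect the only real care to be needed, because $S$ is an algebra anti-homomorphism, so products must be reversed. The grouplike cases use $K_1^{-1} = K_0$ and $K_0^{-1} = K_1$ (from $K_0 K_1 = 1$), giving $S(X_{13}) = X_{31}$ and $S(X_{31}) = X_{13}$. For $X_{12}$ one computes $S(X_{12}) = S(K_1) - (q-q^{-1})S(E_1) = K_0 + (q-q^{-1})K_0 E_1$ using $S(E_1) = -K_1^{-1}E_1$; this matches $1 + X_{31} - X_{31}X_{12}$ after expanding $X_{31}X_{12} = 1 - (q-q^{-1})K_0 E_1$. For $X_{01}$ the order reversal is essential: $S(K_0 F_0) = S(F_0)S(K_0) = (-F_0 K_0)K_1 = -F_0$ after using $K_0 K_1 = 1$, so $S(X_{01}) = K_1 - q(q-q^{-1})F_0$, which equals $1 + X_{13} - X_{13}X_{01}$ once $X_{13}X_{01} = 1 + q(q-q^{-1})F_0$ is substituted. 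The other two generators follow by the same symmetry. The main obstacle throughout is purely clerical: keeping track of the order reversal under $S$ and repeatedly applying $K_0 K_1 = 1$ to collapse the $K$-factors, after which every stated formula falls out.
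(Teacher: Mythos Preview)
Your approach is correct and complete: the direct substitution of the Chevalley expressions from Lemma~\ref{lem:equitLoop} into the Hopf operations of Lemma~\ref{lem:loophopf}, followed by regrouping, verifies every formula, and your sample computations for $\Delta(X_{12})$, $\Delta(X_{01})$, $S(X_{12})$, $S(X_{01})$ are all accurate. The paper itself does not supply a proof here---it simply cites \cite[Theorem~3.4]{tersym}---so there is no alternative argument to compare against; your direct verification is exactly the natural way to recover the result from the data already in the paper.
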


\section{Some $L$-operators for 
$U_q(L(\mathfrak{sl}_2))$}

\noindent 

\noindent 
In this section we recall some $L$-operators for
$U_q(L(\mathfrak{sl}_2))$, and describe their  basic
properties.
\medskip

\noindent
We recall some notation.
Let $\Delta$ denote the coproduct
for a Hopf algebra $H$. Then the opposite coproduct $\Delta^{\rm op}$ 
is the composition
\begin{equation*}
\begin{CD} 
\Delta^{\rm op} : \quad H @>> \Delta >  
H\otimes H  
 @>> r\otimes s\mapsto s \otimes r > H\otimes H.
                  \end{CD}
\end{equation*}

\begin{definition}
\label{def:Lop}
\rm (See \cite[Section~9.1]{miki}.) 
Let $V$ denote a
$U_q(L({\mathfrak{sl}}_2))$-module and $0 \not=t\in \mathbb F$.
Consider an $\mathbb F$-linear map 
\begin{eqnarray*}
L:\quad 
V \otimes  {\bf V}(1,t)  \to 
V \otimes  {\bf V}(1,t).
\end{eqnarray*}
\noindent We call 
this map an  {\it $L$-operator for $V$ with parameter $t$} whenever 
the following diagram commutes for all $u \in
U_q(L({\mathfrak{sl}}_2))$:
\begin{equation*}
\begin{CD}
V\otimes {\bf V}(1,t) @>\Delta(u)>>   V \otimes {\bf V}(1,t)
	   \\ 
          @V L VV                   @VV L V \\
            V \otimes {\bf V}(1,t) @>> \Delta^{\rm op}(u) > 
                  V \otimes {\bf V}(1,t) 
		   \end{CD}
\end{equation*}
\end{definition}

\begin{definition}
\label{def:LoponV}
\rm (See \cite[Section~9.1]{miki}.) Let $V$ denote a
$U_q(L({\mathfrak{sl}}_2))$-module and $0 \not=t\in \mathbb F$.
Consider any
$\mathbb F$-linear map 
\begin{eqnarray}
\label{eq:Lop}
L:\quad 
V \otimes  {\bf V}(1,t)  \to 
V \otimes  {\bf V}(1,t). 
\end{eqnarray}
For $r,s \in \lbrace 0,1\rbrace$ define
an $\mathbb F$-linear map 
$L_{rs}: V \to V$ such that for $v \in V$,
\begin{eqnarray}
&&
L(v\otimes v_0) = L_{00}(v) \otimes v_0 + L_{10}(v) \otimes v_1,
\label{eq:L0}
\\
&&
L(v\otimes v_1) = L_{01}(v) \otimes v_0 + L_{11}(v) \otimes v_1.
\label{eq:L1}
\end{eqnarray}
Here $v_0, v_1$ is the basis for ${\bf V}(1,t)$
from Lemma
\ref{lem:irred}.
\end{definition}

\begin{lemma} 
\label{lem:Lopeq}
Referring to Definition
\ref{def:LoponV},
the map {\rm (\ref{eq:Lop})}
is an $L$-operator for $V$ with
parameter $t$
if and only if the following
equations hold on $V$:
\begin{eqnarray*}
&&
K_1 L_{00} = L_{00} K_1,  \qquad \qquad K_1 L_{01} = q^{-2} L_{01} K_1,
\\
&&
K_1 L_{10} = q^2 L_{10} K_1,  \qquad \qquad K_1 L_{11} = L_{11} K_1;
\\
\\
&&
L_{00} E_1 - q E_1 L_{00} = L_{10},
\qquad \qquad 
L_{01} E_1 - q E_1 L_{01} = L_{11} - L_{00} K_1,
\\
&&
L_{10} E_1 - q^{-1} E_1 L_{10} = 0,
\qquad \qquad 
L_{11} E_1 - q^{-1} E_1 L_{11} = - L_{10} K_1;
\\
\\
&&
F_1 L_{00}  - q^{-1}  L_{00}F_1 = L_{01},
\qquad \qquad 
F_1 L_{01}  - q  L_{01} F_1 = 0,
\\
&&
F_1 L_{10}  - q^{-1}  L_{10} F_1 = L_{11}-K_0 L_{00},
\qquad \qquad 
F_1 L_{11} - q L_{11} F_1 = - K_0 L_{01};
\\
\\
&&
K_0 L_{00} = L_{00} K_0,  \qquad \qquad K_0 L_{01} = q^{2} L_{01} K_0,
\\
&&
K_0 L_{10} = q^{-2} L_{10} K_0,  \qquad \qquad K_0 L_{11} = L_{11} K_0;
\\
\\
&&
L_{00} E_0 - q^{-1} E_0 L_{00} = -t L_{01}K_0,
\qquad \qquad 
L_{01} E_0 - q^{-1} E_0 L_{01} = 0,
\\
&&
L_{10} E_0 - q E_0 L_{10} = t L_{00}-t L_{11} K_0,
\qquad \qquad 
L_{11} E_0 - q E_0 L_{11} =  t L_{01};
\\
\\
&&
F_0 L_{00}  - q  L_{00}F_0 = -t^{-1} K_1 L_{10},
\qquad \qquad 
F_0 L_{01}  - q^{-1}  L_{01} F_0 =t^{-1} L_{00} -t^{-1} K_1 L_{11},
\\
&&
F_0 L_{10}  - q L_{10} F_0 =0,
\qquad \qquad 
F_0 L_{11} - q^{-1} L_{11} F_0= t^{-1} L_{10}.
\end{eqnarray*}
\end{lemma}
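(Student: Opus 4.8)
The plan is to translate the abstract commuting-square condition of Definition \ref{def:Lop} into concrete equations by evaluating the diagram on a convenient spanning set and matching tensor components. The key observation is that the stated list of equations is exactly the component-wise expression of the single operator identity
\begin{eqnarray*}
L\,\Delta(u) = \Delta^{\rm op}(u)\,L \qquad \qquad (u \in U_q(L(\mathfrak{sl}_2))).
\end{eqnarray*}
Since both sides are $\mathbb F$-linear in $u$ and multiplicative in the appropriate sense (because $\Delta$ and $\Delta^{\rm op}$ are algebra homomorphisms, and $L$ is fixed), it suffices to verify the identity on the Chevalley generators $E_i, F_i, K^{\pm 1}_i$ from Definition \ref{def:loop}. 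This reduces an a priori infinite family of conditions to a finite check, one generator at a time, and the grouping of the displayed equations into blocks keyed to $K_1, E_1, F_1, K_0, E_0, F_0$ reflects precisely this generator-by-generator organization.

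First I would fix the generator $u$ and compute $\Delta(u)$ and $\Delta^{\rm op}(u)$ as operators on $V \otimes {\bf V}(1,t)$, using the coproduct formulas in Lemma \ref{lem:loophopf} together with the explicit action of the Chevalley generators on the two-dimensional module ${\bf V}(1,t)$ recorded in Lemma \ref{lem:irred} (equivalently, the matrices in the Example). Next I would apply both $L\,\Delta(u)$ and $\Delta^{\rm op}(u)\,L$ to a generic vector $v \otimes v_0$ and to $v \otimes v_1$, expand $L$ via the defining relations (\ref{eq:L0}), (\ref{eq:L1}), and then read off the coefficient of $v_0$ and of $v_1$ in each resulting expression. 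Because $\lbrace v_0, v_1 \rbrace$ is a basis of ${\bf V}(1,t)$ and $v \in V$ is arbitrary, equating these tensor components yields operator equations on $V$ alone; matching them against the target list establishes the equivalence in both directions simultaneously, since every step is reversible.

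Concretely, take $u = K_1$: on ${\bf V}(1,t)$ the generator $K_1$ acts on $v_0, v_1$ by $q, q^{-1}$ respectively, so $\Delta(K_1) = K_1 \otimes K_1 = \Delta^{\rm op}(K_1)$ acts on $v \otimes v_0$ as $q\,K_1 v \otimes v_0$ and on $v \otimes v_1$ as $q^{-1} K_1 v \otimes v_1$. Pushing these through $L$ on both sides and comparing the $v_0$- and $v_1$-components produces exactly the four relations $K_1 L_{00} = L_{00} K_1$, $K_1 L_{01} = q^{-2} L_{01} K_1$, $K_1 L_{10} = q^2 L_{10} K_1$, $K_1 L_{11} = L_{11} K_1$; the powers of $q$ arise from the mismatch between the scalar $K_1$ contributes to the ${\bf V}(1,t)$-factor before versus after $L$ permutes the tensor order. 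The $E_i$ and $F_i$ cases are analogous but carry an extra term, because $\Delta(E_i)$ and $\Delta^{\rm op}(E_i)$ differ genuinely (the coproduct is not cocommutative); this is what generates the inhomogeneous right-hand sides such as $L_{11} - L_{00} K_1$ and $L_{11} - K_0 L_{00}$.

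The main obstacle is purely bookkeeping rather than conceptual: the $E_0, F_0$ blocks involve the evaluation parameter $t$ through the action of $E_0, F_0$ on ${\bf V}(1,t)$, and one must track the factors $K_i$ that the coproduct inserts on the ${\bf V}(1,t)$-side, which after applying the module action become the scalars $q^{\pm 1}$ and the parameters $t^{\pm 1}$ appearing in the equations. The most error-prone point is keeping the order of composition straight when $L$ is applied after $\Delta(u)$ versus before $\Delta^{\rm op}(u)$, since $L$ mixes the $v_0$- and $v_1$-slots and therefore does not commute with the diagonal ${\bf V}(1,t)$-action in general. I would organize the computation into a table indexed by generator and by input basis vector to avoid sign and exponent slips, and I would double-check the $E_0$ and $F_0$ rows against the $E_1$ and $F_1$ rows, since the two are related by the diagram automorphism exchanging the indices $0$ and $1$ together with $t \leftrightarrow t^{-1}$.
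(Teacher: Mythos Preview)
Your proposal is correct and is exactly the routine computation the paper has in mind; the paper's own proof reads in its entirety ``This is routinely checked.'' Your reduction to the Chevalley generators via multiplicativity of $\Delta$ and $\Delta^{\rm op}$, followed by evaluation on $v\otimes v_0$ and $v\otimes v_1$ and comparison of the $v_0$- and $v_1$-components, is the intended unpacking of that phrase.
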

\begin{proof} This is routinely checked.
\end{proof}

\begin{example}
\label{ex:LopEval}
\rm (See \cite[Appendix]{kr},
\cite[Proposition~9.2]{miki}.)
Referring to Definition
\ref{def:LoponV}, assume that $V$ is an evaluation  module
${\bf V}(d,\mu)$ such that $q^{2i}\not=1$
for $1 \leq i \leq d$. Consider the matrices that
represent the
$L_{rs}$ with respect to the basis
$\lbrace v_i\rbrace_{i=0}^d$ for 
${\bf V}(d,\mu)$ from Lemma
\ref{lem:irred}.
Then the following are equivalent:
\begin{enumerate}
\item[\rm (i)] 
the map
 {\rm (\ref{eq:Lop}) }
is an $L$-operator for $V$ with parameter $t$;
\item[\rm (ii)]
the matrix entries are given in 
the table below (all matrix entries not shown are zero):
\bigskip

\centerline{
\begin{tabular}[t]{c|ccc}
{\rm operator} & {\rm $(i,i-1)$-entry} & {\rm $(i,i)$-entry} &
 {\rm $(i-1,i)$-entry}
   \\  \hline
$L_{00}$ &
$0$ & $ \frac{q^{1-i}-\mu^{-1}t q^{i-d}}{q-q^{-1}}\,\xi $     & $0$ 
  \\ 
$L_{01}$ &
 $\lbrack i \rbrack_q q^{1-i} \xi$ & $0$ & $0$
   \\
$L_{10}$ &
$0$ & $0$ &  $\lbrack d-i+1 \rbrack_q q^{i-d} \mu^{-1} t \xi $ 
   \\
$L_{11}$ &
$0$ & $ \frac{q^{i-d+1}-\mu^{-1}t q^{-i}}{q-q^{-1}}\,\xi   $   & $0$ 
     \end{tabular}
}
\medskip
\noindent Here $\xi \in \mathbb F$. 
\end{enumerate}
\end{example}

\begin{lemma}
\label{lem:LopUotimesV}
{\rm (See \cite[Proposition~9.3]{miki}.)}
Let $U$ and $V$ denote
$U_q(L({\mathfrak{sl}}_2))$-modules, and consider the
$U_q(L({\mathfrak{sl}}_2))$-module $U\otimes V$ from
Lemma
\ref{lem:UotimesV}. 
Let $0 \not=t\in \mathbb F$. Suppose we are given
$L$-operators for $U$ and $V$ with parameter $t$.
Then there exists an $L$-operator for $U\otimes V$ with parameter
$t$ such
that for $r,s\in \lbrace 0,1\rbrace$,
\begin{eqnarray}
L_{rs} (u \otimes v) = 
L_{r0}(u)\otimes L_{0s}(v) 
+
L_{r1}(u)\otimes L_{1s}(v) 
\qquad \qquad u\in U, \quad v \in V.
\label{eq:LL}
\end{eqnarray}
\end{lemma}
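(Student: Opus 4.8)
My plan is to prove Lemma~\ref{lem:LopUotimesV} by verifying directly that the map defined by \eqref{eq:LL} satisfies the commuting-diagram condition of Definition~\ref{def:Lop}. Rather than work with the abstract diagram for every $u \in U_q(L(\mathfrak{sl}_2))$, I would reduce to the generators and exploit Lemma~\ref{lem:Lopeq}, which translates the $L$-operator condition into the explicit system of commutation relations among the components $L_{rs}$ and the Chevalley generators $E_i, F_i, K_i^{\pm 1}$. Thus the goal becomes: \emph{assuming the hypotheses, show that the tensor-product components defined by \eqref{eq:LL} satisfy that same system of relations on $U \otimes V$.} This is the natural strategy because Lemma~\ref{lem:Lopeq} has already done the hard conceptual work of unpacking the diagram into a checkable list of identities.

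\textbf{Carrying out the verification.}
First I would fix a generator, say $K_1$, and a pair $r,s \in \{0,1\}$, and compute the action of $K_1$ on $L_{rs}(u \otimes v)$ using the coproduct formula $K_1(u \otimes v) = K_1(u) \otimes K_1(v)$ from Lemma~\ref{lem:UotimesV}. Applying the $K_1$-relations valid on $U$ and on $V$ (which hold by hypothesis), each summand $L_{r0}(u) \otimes L_{0s}(v) + L_{r1}(u) \otimes L_{1s}(v)$ picks up the appropriate power of $q$; the key observation is that the exponents combine additively across the tensor factors so as to reproduce exactly the required scalar on $U \otimes V$. For instance, the middle index in $L_{r\cdot}(u) \otimes L_{\cdot s}(v)$ is summed over, and the diagonal nature of the $K_i$-relations forces the cross terms to carry matching exponents; this is where one checks that the bookkeeping is consistent. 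Next I would repeat this for $E_1$ and $F_1$, now using the coproduct rules $E_1(u \otimes v) = E_1(u) \otimes v + K_1(u) \otimes E_1(v)$ and the analogous rule for $F_1$, together with the four $E_1$-relations (resp.\ $F_1$-relations) on each of $U$ and $V$. Here the computation expands $L_{rs}(E_1(u \otimes v))$ into four terms and one must collect them and match against $q^{\pm 1} E_1 L_{rs} + (\text{correction})$; the corrections involving $K_1$ appear precisely because of the $K_1(u) \otimes E_1(v)$ term in the coproduct. Finally, by the $\mathbb Z_4$-symmetry of the defining relations (the index $0$ relations being obtained from the index $1$ relations), the verifications for $K_0, E_0, F_0$ are formally identical, with $t$ inserted in the appropriate places, so I would remark that they follow by the same argument.

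\textbf{The main obstacle.}
The conceptually delicate point --- and the step I expect to require the most care --- is confirming that the summation over the internal index in \eqref{eq:LL} interacts correctly with the \emph{non-diagonal} relations, that is, the $E_i$ and $F_i$ relations that mix different components $L_{rs}$. The diagonal $K_i$-relations are essentially immediate, but for $E_1$ and $F_1$ one must verify that when the relation for, say, $L_{00}E_1 - qE_1L_{00} = L_{10}$ is applied factor-by-factor, the induced identity on the product components $L_{10}(u \otimes v)$ assembles correctly from the four expanded terms without any residual miscounting of the $q$-powers carried by the $K_i$ factors from the coproduct. This matrix-multiplication-style consistency is exactly the statement that \eqref{eq:LL} is the $(r,s)$ entry of a product of $2 \times 2$ matrices over $\operatorname{End}(U)$ and $\operatorname{End}(V)$, and the cleanest way to organize the proof may be to recognize \eqref{eq:LL} as such a matrix product and argue abstractly that the $L$-operator condition is preserved under this ``multiplication'' --- which is the structural reason the lemma is true --- rather than grinding through all $4 \times 6$ relations individually. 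I expect the actual paper simply to cite \cite[Proposition~9.3]{miki}, but a self-contained proof would proceed exactly along these lines.
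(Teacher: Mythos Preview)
Your proposal is correct and follows essentially the same approach as the paper: define the components $L_{rs}$ on $U\otimes V$ by the formula \eqref{eq:LL}, use the coproduct formulas of Lemma~\ref{lem:UotimesV} together with the hypothesis that the $L_{rs}$ on $U$ and on $V$ satisfy the relations of Lemma~\ref{lem:Lopeq}, and verify directly that the new components again satisfy those relations, whence Lemma~\ref{lem:Lopeq} gives the conclusion. The paper compresses all of this into the sentence ``one checks that the $L_{rs}$ satisfy the equations in Lemma~\ref{lem:Lopeq},'' so your outline is in fact more detailed than what appears there; your matrix-product interpretation of \eqref{eq:LL} is a good way to organize the bookkeeping but is not spelled out in the paper.
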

\begin{proof} For
 $r,s\in \lbrace 0,1\rbrace$ define
 an $\mathbb F$-linear map $L_{rs}: U\otimes V \to U \otimes V$
 that satisfies 
(\ref{eq:LL}). Using
(\ref{eq:LL})
and
Lemma
\ref{lem:UotimesV}
one checks that the $L_{rs}$ satisfy the equations in 
Lemma
\ref{lem:Lopeq}. The result follows by Lemma
\ref{lem:Lopeq}. 
\end{proof}

\begin{corollary} Adopt the notation and assumptions of
Proposition \ref{lem:CP}.
Then for $0 \not=t \in \mathbb F$
there exists a nonzero $L$-operator for $V$ with parameter $t$.
\end{corollary}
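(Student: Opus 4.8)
The plan is to reduce to the case of a single evaluation module via Proposition \ref{lem:CP}, build an $L$-operator on each factor using Example \ref{ex:LopEval}, glue the pieces together with Lemma \ref{lem:LopUotimesV}, and transport the result back to $V$; the one delicate point is to check that the glued operator does not vanish.

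First I would invoke Proposition \ref{lem:CP} to write $V \cong V_1 \otimes \cdots \otimes V_n$, where each $V_j = {\bf V}(d_j,\mu_j)$ is an evaluation module with $d_j \geq 1$. Since $q$ is not a root of unity we have $q^{2i}\neq 1$ for $1\leq i \leq d_j$, so Example \ref{ex:LopEval} applies to each $V_j$: choosing the free scalar $\xi_j$ to be nonzero yields an $L$-operator for $V_j$ with parameter $t$. Applying Lemma \ref{lem:LopUotimesV} repeatedly (induction on $n$) then produces an $L$-operator $L$ for $V_1 \otimes \cdots \otimes V_n$ with parameter $t$. Finally, an isomorphism $\phi:V \to V_1\otimes\cdots\otimes V_n$ of $U_q(L(\mathfrak{sl}_2))$-modules carries $L$-operators to $L$-operators via $(\phi^{-1}\otimes \mathrm{id})\,L\,(\phi\otimes \mathrm{id})$, because $\phi\otimes \mathrm{id}$ intertwines both $\Delta(u)$ and $\Delta^{\rm op}(u)$, so the diagram in Definition \ref{def:Lop} remains commutative; this transport preserves nonvanishing. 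Hence a nonzero $L$ on the tensor product gives a nonzero $L$-operator for $V$ with parameter $t$.

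The main obstacle is to show $L\neq 0$, since the gluing formula (\ref{eq:LL}) is a sum and could in principle collapse. I would establish the sharper statement that the component $L_{00}$ is nonzero, propagated through the induction. For a single evaluation module the $(i,i)$-entry of $L_{00}$ is a nonzero multiple of $q^{1-i}-\mu_j^{-1}t\,q^{i-d_j}$, and the entries at $i=0$ and $i=1$ cannot both vanish (they would force $q^2=1$), so $L_{00}\neq 0$ on each factor; this is where $d_j\geq 1$ is used. To pass to a tensor product without cancellation, I would grade every module by its $K_1$-eigenvalue and read off from Lemma \ref{lem:Lopeq} that, for any $L$-operator, $L_{00}$ and $L_{11}$ have degree $0$ while $L_{01}$ and $L_{10}$ multiply the $K_1$-eigenvalue by $q^{-2}$ and $q^2$ respectively. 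In the formula $L_{00}(u\otimes v)=L_{00}(u)\otimes L_{00}(v)+L_{01}(u)\otimes L_{10}(v)$, the two summands map a weight space $U_\lambda\otimes V_\mu$ into the distinct summands $U_\lambda\otimes V_\mu$ and $U_{q^{-2}\lambda}\otimes V_{q^{2}\mu}$ of the weight decomposition, and since $q^{-2}\neq 1$ these cannot cancel. Thus $L_{00}$ for $U\otimes V$ is nonzero whenever $L_{00}$ is nonzero on each factor, and an induction on $n$ yields $L_{00}\neq 0$, whence $L\neq 0$, completing the argument.
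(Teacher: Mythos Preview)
Your proof is correct and follows exactly the route the paper takes: decompose $V$ into evaluation modules via Proposition~\ref{lem:CP}, build an $L$-operator on each factor by Example~\ref{ex:LopEval}, and assemble with Lemma~\ref{lem:LopUotimesV}. The paper's proof is a one-line citation of these three results and leaves the nonvanishing implicit; your weight-space argument showing $L_{00}\neq 0$ survives the tensor product is a valid way to fill that gap.
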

\begin{proof} By Proposition
\ref{lem:CP} along with
Example
\ref{ex:LopEval} and Lemma
\ref{lem:LopUotimesV}.
\end{proof}

\section{TD pairs and $L$-operators}

In Section 1 we discussed a TD pair $A,A^*$ on $V$.
We now return to this discussion, adopting the notation
and assumptions that were in force at the end of Section 1.
Recall the scalars $q,a,b$ from
 (\ref{eq:qracah}).
Recall the map $K$ from
above (\ref{eq:K}). 

\begin{proposition} 
\label{lem:TDuq}
{\rm (See \cite[p.~103]{ITaug}.)}
Assume that $\mathbb F$ is algebraically closed
with characteristic zero, and $q$ is not a root of
unity. Then
the vector space $V$ becomes a 
$U_q(L({\mathfrak{sl}}_2))$-module
on which $K=X_{31}$, $K^{-1}=X_{13}$ and
\begin{eqnarray*}
A = a X_{01}+a^{-1} X_{12},
\qquad \qquad 
A^* = b X_{23}+b^{-1} X_{30}.
\end{eqnarray*}
\end{proposition}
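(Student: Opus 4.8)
The plan is to build the module structure directly on the equitable presentation of Lemma~\ref{lem:equitLoop}: it suffices to produce six $\mathbb F$-linear maps on $V$, to be called $X_{01},X_{12},X_{23},X_{30},X_{13},X_{31}$, satisfying the inversion relation $X_{13}X_{31}=1$, the ten $q$-commutator relations, and the two cubic $q$-Serre relations, and then to check that they realize $K=X_{31}$, $K^{-1}=X_{13}$, $A=aX_{01}+a^{-1}X_{12}$, $A^*=bX_{23}+b^{-1}X_{30}$. Throughout I grade $V=\sum_{i=0}^d U_i$ by the split decomposition, so that by (\ref{eq:K}) the map $K$ is the grade-zero operator with $KU_i=q^{d-2i}U_i$, by (\ref{eq:Raise}) the map $R$ has grade $+1$, and $KRK^{-1}=q^{-2}R$.

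First I would set $X_{31}=K$ and $X_{13}=K^{-1}$; these are mutually inverse and invertible, disposing of the inversion relations. Next I would pin down the \emph{shape} of the remaining four generators using only the $q$-commutation relations involving $X_{13},X_{31}$. Writing each generator as a sum of homogeneous grade-$j$ components and using the scaling $KX^{(j)}K^{-1}=q^{2j}X^{(j)}$, the relations $\frac{qX_{01}X_{13}-q^{-1}X_{13}X_{01}}{q-q^{-1}}=1$ and $\frac{qX_{31}X_{12}-q^{-1}X_{12}X_{31}}{q-q^{-1}}=1$ force $X_{01}=K+S_1$ and $X_{12}=K^{-1}+S_2$ with $S_1,S_2$ of grade $+1$, while the relations with $X_{23},X_{30}$ force $X_{23}=K^{-1}+T_1$ and $X_{30}=K+T_2$ with $T_1,T_2$ of grade $-1$. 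Combining these shapes with (\ref{eq:AKR}) and the dual decomposition $A^*=bK^{-1}+b^{-1}K+L$ (whose diagonal part equals $\theta^*_iI=bK^{-1}+b^{-1}K$ on $U_i$ by (\ref{eq:qracah}), and whose grade $-1$ part $L$ comes from the right-hand inclusion of (\ref{eq:AActU})), the required identities reduce to the linear constraints $aS_1+a^{-1}S_2=R$ and $bT_1+b^{-1}T_2=L$.

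The heart of the argument is to construct the four off-diagonal maps $S_1,S_2,T_1,T_2$ explicitly and to verify the remaining relations. Passing through the Chevalley isomorphism of Lemma~\ref{lem:equitLoop}, the pair $(S_2,T_1)$ together with $K_1=K^{-1}$ should form one $U_q(\mathfrak{sl}_2)$-triple (they correspond to $E_1,F_1$) and the pair $(S_1,T_2)$ with $K_0=K$ another (corresponding to $F_0,E_0$); thus the raising part $R$ of $A$ and the lowering part $L$ of $A^*$ must each be split across the two Chevalley nodes in a prescribed way. Constructing these four maps from the split-decomposition data is the technical core. Here the relation (\ref{eq:psiVSR}), namely $\psi R-R\psi=(q-q^{-1})(K-K^{-1})$, is an $\mathfrak{sl}_2$-type identity of exactly this flavor and serves as a guide for the normalizations dictated by the $q$-Racah scalars in (\ref{eq:qracah}).

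I expect the main obstacle to be the two cubic $q$-Serre relations together with the \emph{cross} relations $\frac{qX_{12}X_{23}-q^{-1}X_{23}X_{12}}{q-q^{-1}}=1$ and $\frac{qX_{30}X_{01}-q^{-1}X_{01}X_{30}}{q-q^{-1}}=1$: these are precisely the relations that mix the grade $+1$ and grade $-1$ parts, and so they encode the genuine tridiagonal (i.e.\ $q$-Dolan--Grady) relations satisfied by $A,A^*$ rather than merely the grading. Establishing them should require the detailed structural results on TD pairs from \cite{TD00} and on the Bockting operator from \cite{bockting}, controlling the commutators among $R$, $L$, $\psi$ and the action of $A^*$ across the $U_i$. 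As a consistency check, and an alternative route once existence is secured, the hypotheses of Proposition~\ref{lem:CP} hold for the resulting module, so it must be a tensor product of evaluation modules, against which the proposed operators can be matched via the explicit entries of Example~\ref{ex:LopEval}.
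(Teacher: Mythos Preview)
The paper does not actually prove this proposition: its entire proof is the sentence ``This is how \cite[p.~103]{ITaug} looks from the equitable point of view.'' In other words, the existence of the $U_q(L(\mathfrak{sl}_2))$-module structure on $V$ is an established result of Ito and Terwilliger; the proposition merely restates that result in the equitable generators of Lemma~\ref{lem:equitLoop}, and the only work is the dictionary between presentations.

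Your proposal, by contrast, attempts to reconstruct the module from scratch out of the split-decomposition data. That is a genuinely different and far more ambitious route, and as written it is not a proof but a plan with the hard part left open. You correctly identify the grading constraints that force $X_{01}=K+S_1$, $X_{12}=K^{-1}+S_2$, $X_{23}=K^{-1}+T_1$, $X_{30}=K+T_2$ with $S_1,S_2$ of grade $+1$ and $T_1,T_2$ of grade $-1$, and you correctly reduce the identities for $A,A^*$ to the splittings $aS_1+a^{-1}S_2=R$ and $bT_1+b^{-1}T_2=L$. But you do not actually \emph{construct} $S_1,S_2,T_1,T_2$: saying that $(S_2,T_1,K^{-1})$ and $(S_1,T_2,K)$ ``should form'' $U_q(\mathfrak{sl}_2)$-triples is exactly what must be proved, and producing such a splitting of $R$ and $L$ compatible with all the relations is the substance of \cite{ITaug}. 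You then acknowledge that the cross relations and the cubic $q$-Serre relations are the ``main obstacle'' and defer to unspecified structural results; this is where the argument stops being a proof. Invoking Proposition~\ref{lem:CP} as a consistency check does not help with existence, since that proposition presupposes the module structure you are trying to build. If you want an independent proof along these lines, you would need to reproduce the relevant machinery of \cite{ITaug} (or of \cite{qRacahIT}) that actually manufactures the two commuting $U_q(\mathfrak{sl}_2)$ actions from a TD pair of $q$-Racah type; the present paper simply imports that result.
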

\begin{proof}  
This is how {\rm \cite[p.~103]{ITaug}} looks
from the equitable point of view.
\end{proof}

\begin{note}
\rm
The $U_q(L({\mathfrak{sl}}_2))$-module structure 
from Proposition
\ref{lem:TDuq} is not
 unique in general.
\end{note}

\noindent We now investigate the
$U_q(L({\mathfrak{sl}}_2))$-module structure 
from Proposition
\ref{lem:TDuq}.
Recall the map $R$ from
above (\ref{eq:AKR}).

\begin{lemma} 
\label{lem:Ruq} Assume that 
the vector space $V$ becomes a 
$U_q(L({\mathfrak{sl}}_2))$-module
on which $K=X_{31}$, $K^{-1}=X_{13}$ and
\begin{eqnarray*}
A = a X_{01}+a^{-1} X_{12},
\qquad \qquad 
A^* = b X_{23}+b^{-1} X_{30}.
\end{eqnarray*}
On this module,
\begin{enumerate}
\item[\rm (i)]
$R$ looks as follows in the
equitable presentation:
\begin{eqnarray}
R = a (X_{01}-X_{31}) + a^{-1} (X_{12}-X_{13}).
\label{eq:RXX}
\end{eqnarray}
\item[\rm (ii)] $R$ looks as follows in the
 Chevalley presentation:
\begin{eqnarray}
R = (q-q^{-1})(a q K_0 F_0 -a^{-1} E_1).
\label{eq:REF}
\end{eqnarray}
\end{enumerate}
\end{lemma}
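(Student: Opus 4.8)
The plan is to derive both formulas by direct substitution, using the defining relation (\ref{eq:AKR}) together with the isomorphism recorded in Lemma \ref{lem:equitLoop}.

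For part (i), recall from (\ref{eq:AKR}) that $A = aK + a^{-1}K^{-1} + R$. The hypothesis of the lemma supplies $K = X_{31}$, $K^{-1} = X_{13}$, and $A = aX_{01} + a^{-1}X_{12}$. I would substitute these three identities into (\ref{eq:AKR}) and solve for $R$, obtaining
$$
R = A - aK - a^{-1}K^{-1} = a(X_{01} - X_{31}) + a^{-1}(X_{12} - X_{13}),
$$
which is exactly (\ref{eq:RXX}).

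For part (ii), I would start from (\ref{eq:RXX}) and rewrite the two differences $X_{01} - X_{31}$ and $X_{12} - X_{13}$ in the Chevalley presentation, using the forward isomorphism displayed in Lemma \ref{lem:equitLoop}. From that isomorphism $X_{01} = K_0 + q(q-q^{-1})K_0 F_0$ and $X_{31} = K_0$, so $X_{01} - X_{31} = q(q-q^{-1})K_0 F_0$; likewise $X_{12} = K_1 - (q-q^{-1})E_1$ and $X_{13} = K_1$, so $X_{12} - X_{13} = -(q-q^{-1})E_1$. Substituting these into (\ref{eq:RXX}) and factoring out $(q-q^{-1})$ gives
$$
R = aq(q-q^{-1})K_0 F_0 - a^{-1}(q-q^{-1})E_1 = (q-q^{-1})\bigl(aq K_0 F_0 - a^{-1}E_1\bigr),
$$
which is (\ref{eq:REF}).

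I expect no genuine obstacle here, since both parts are purely algebraic substitutions. The only point requiring attention is the identification of the two presentations of $U_q(L(\mathfrak{sl}_2))$ made in Note \ref{note:ident}, which is what licenses passing freely between the equitable generators $X_{ij}$ and the Chevalley generators $E_i, F_i, K_i$; once that identification is granted, the computation is routine and the two displays follow immediately.
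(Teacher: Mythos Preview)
Your proposal is correct and follows essentially the same approach as the paper: for (i) you substitute the module identities into (\ref{eq:AKR}) and solve for $R$, and for (ii) you translate (\ref{eq:RXX}) into Chevalley generators via the isomorphism of Lemma \ref{lem:equitLoop} under the identification of Note \ref{note:ident}. Your explicit computations of $X_{01}-X_{31}$ and $X_{12}-X_{13}$ are accurate, and the result follows exactly as you describe.
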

\begin{proof} (i) 
In
line (\ref{eq:AKR}) eliminate $A,K,K^{-1}$ using the
assumptions of the present lemma.
\\
\noindent (ii) Evaluate the right-hand side of
(\ref{eq:RXX}) using the identifications from Lemma
\ref{lem:equitLoop}
and 
Note
\ref{note:ident}.
\end{proof}

\noindent We now present our main result.
Recall the Bockting operator $\psi$ from
(\ref{eq:psiActU}),
(\ref{eq:psiVSR}).

\begin{theorem}
\label{thm:main}
Assume that 
the vector space $V$ becomes a 
$U_q(L({\mathfrak{sl}}_2))$-module
on which $K=X_{31}$, $K^{-1}=X_{13}$ and
\begin{eqnarray*}
A = a X_{01}+a^{-1} X_{12},
\qquad \qquad 
A^* = b X_{23}+b^{-1} X_{30}.
\end{eqnarray*}
 Consider an $L$-operator for $V$ with parameter
$a^2$. Then 
 on $V$,
\begin{eqnarray}
\psi = -a (L_{00})^{-1} L_{01}
\label{eq:psiL}
\end{eqnarray}
provided that $L_{00}$ is invertible.
\end{theorem}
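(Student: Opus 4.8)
The plan is to use the uniqueness of the Bockting operator. By \cite[Lemma~5.7]{bockting}, $\psi$ is the \emph{unique} $\mathbb F$-linear map satisfying both (\ref{eq:psiActU}) and (\ref{eq:psiVSR}); hence it suffices to prove that $\psi':=-a(L_{00})^{-1}L_{01}$ satisfies these two conditions, and then conclude $\psi'=\psi$. Throughout I use the identifications $K=X_{31}=K_0$ and $K^{-1}=X_{13}=K_1$ coming from Proposition~\ref{lem:TDuq} and Lemma~\ref{lem:equitLoop}, the relations of Lemma~\ref{lem:Lopeq}, and the Chevalley form $R=(q-q^{-1})(aqK_0F_0-a^{-1}E_1)$ from Lemma~\ref{lem:Ruq}(ii).

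First I would check the lowering condition (\ref{eq:psiActU}). On $U_i$ the map $K=K_0$ acts as $q^{d-2i}I$ by (\ref{eq:K}). The relations $K_0L_{00}=L_{00}K_0$ and $K_0L_{01}=q^2L_{01}K_0$ from Lemma~\ref{lem:Lopeq} show that $L_{00}$, and hence $(L_{00})^{-1}$, preserves each $K_0$-eigenspace $U_i$, whereas $L_{01}$ maps the $q^{d-2i}$-eigenspace into the $q^{d-2(i-1)}$-eigenspace; that is, $L_{01}U_i\subseteq U_{i-1}$. Therefore $\psi'U_i\subseteq U_{i-1}$, with the convention $U_{-1}=0$.

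The substance of the proof is (\ref{eq:psiVSR}). Writing $\psi'=-a\Psi$ with $\Psi=(L_{00})^{-1}L_{01}$ and inserting the Chevalley form of $R$, a short manipulation (dividing by the nonzero scalar $q-q^{-1}$) reduces the desired identity to
\begin{equation*}
(\Psi E_1-E_1\Psi)-a^2q\,(\Psi K_0F_0-K_0F_0\Psi)=K_0-K_1.
\end{equation*}
I would evaluate the first expression using $L_{01}E_1-qE_1L_{01}=L_{11}-L_{00}K_1$ together with $L_{00}E_1-qE_1L_{00}=L_{10}$ (the latter conjugated through $(L_{00})^{-1}$), obtaining $\Psi E_1-E_1\Psi=-(L_{00})^{-1}L_{10}\Psi+(L_{00})^{-1}L_{11}-K_1$. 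For the second, the $K_0$-relations give $\Psi K_0=q^{-2}K_0\Psi$, so $\Psi K_0F_0-K_0F_0\Psi=K_0(q^{-2}\Psi F_0-F_0\Psi)$, and I would compute $q^{-2}\Psi F_0-F_0\Psi$ from $F_0L_{00}-qL_{00}F_0=-t^{-1}K_1L_{10}$ and $F_0L_{01}-q^{-1}L_{01}F_0=t^{-1}L_{00}-t^{-1}K_1L_{11}$, taking $t=a^2$.

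The step I expect to be the main obstacle is verifying that all terms involving $L_{10}$ and $L_{11}$ cancel. Here the parameter value $t=a^2$ is decisive: it makes the factor $-a^2q$ combine with the $q^{-1}t^{-1}$ prefactors to leave clean coefficients, and the relation $K_0K_1=1$ together with $K_0(L_{00})^{-1}=(L_{00})^{-1}K_0$ collapses $K_0(L_{00})^{-1}K_1L_{11}$ to $(L_{00})^{-1}L_{11}$ and $K_0(L_{00})^{-1}K_1L_{10}\Psi$ to $(L_{00})^{-1}L_{10}\Psi$. Adding the two expressions then cancels the $(L_{00})^{-1}L_{11}$ and $(L_{00})^{-1}L_{10}\Psi$ contributions, leaving exactly $K_0-K_1=K-K^{-1}$, as required. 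Since $\psi'$ satisfies both (\ref{eq:psiActU}) and (\ref{eq:psiVSR}), uniqueness gives $\psi=\psi'=-a(L_{00})^{-1}L_{01}$.
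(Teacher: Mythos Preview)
Your proposal is correct and follows essentially the same route as the paper: both use the uniqueness characterization of $\psi$ from \cite[Lemma~5.7]{bockting}, verify the lowering condition via $K_0L_{00}=L_{00}K_0$ and $K_0L_{01}=q^2L_{01}K_0$, and verify the commutator identity by inserting the Chevalley form of $R$ from Lemma~\ref{lem:Ruq}(ii) and invoking the relations of Lemma~\ref{lem:Lopeq} together with $K_0K_1=1$ and $t=a^2$. The only cosmetic difference is that the paper multiplies through by $L_{00}$ at the outset (working with $L_{00}\widehat\psi=-aL_{01}$ to avoid carrying $(L_{00})^{-1}$), whereas you keep $\Psi=(L_{00})^{-1}L_{01}$ throughout; the cancellations are identical.
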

\begin{proof} Let $\widehat \psi$ denote the
expression on the right in
(\ref{eq:psiL}). We show
$\psi=\widehat \psi$. To do this, we show that 
$\widehat \psi$ satisfies
(\ref{eq:psiActU}), 
(\ref{eq:psiVSR}).
Concerning
(\ref{eq:psiActU}), 
by Lemma
\ref{lem:Lopeq} the equation
$K_0 {\widehat \psi} = q^2 
{\widehat \psi} K_0$ holds on $V$.
By Lemma
\ref{lem:equitLoop},
Note
\ref{note:ident}, and the construction, we obtain
$K_0 =X_{31}=K$ on $V$.
By these comments
$K {\widehat \psi} = q^2 
{\widehat \psi} K$ on $V$.
By this and
(\ref{eq:K}) we obtain
${\widehat \psi} U_i \subseteq U_{i-1}$ for $0 \leq i \leq d$.
So 
${\widehat \psi}$ satisfies
(\ref{eq:psiActU}).
Next we show that
${\widehat \psi}$ satisfies
(\ref{eq:psiVSR}).
Since $L_{00}$ is invertible and $K_0K_1=1$ it suffices to show that on $V$,
\begin{eqnarray}
\label{eq:L00W}
L_{00}({\widehat \psi} R - R {\widehat \psi})
= (q-q^{-1}) L_{00}(K_0-K_1).
\end{eqnarray}
By this and
(\ref{eq:REF}) it suffices to show that on $V$,
\begin{eqnarray}
a q L_{00}(
{\widehat \psi} K_0 F_0 - K_0 F_0 
{\widehat \psi}) - a^{-1}
L_{00}(
{\widehat \psi} E_1 - 
E_1 
{\widehat \psi} ) + L_{00}(K_1 - K_0)=0.
\label{eq:want}
\end{eqnarray}
We examine the terms in 
(\ref{eq:want}).
By Lemma
\ref{lem:Lopeq}
and the construction, the following hold on $V$:
\begin{eqnarray*}
L_{00} {\widehat \psi} K_0 F_0 &=& -a L_{01} K_0 F_0
\\
&=& -a q^{-2} K_0 L_{01} F_0  
\\
&=& -a q^{-1} K_0 ( F_0 L_{01}- a^{-2} L_{00} +  a^{-2} K_1 L_{11})
\end{eqnarray*}
and
\begin{eqnarray*}
L_{00} K_0 F_0 {\widehat \psi}
 &=& 
K_0 L_{00} F_0 {\widehat \psi}
\\
 &=&  q^{-1} K_0 (a^{-2} K_1 L_{10}+ F_0 L_{00}) {\widehat \psi}
\\
 &=&  q^{-1} K_0 ( 
 a^{-2} K_1 L_{10} {\widehat \psi}
 -a F_0 L_{01} 
 )
\end{eqnarray*}
and
\begin{eqnarray*}
L_{00} {\widehat \psi} E_1 &=& 
-a L_{01} E_1 
\\
&=&
-a (q E_1 L_{01} + L_{11}-L_{00} K_1) 
\\
&=&
-a (q E_1 L_{01} + L_{11}-K_1 L_{00}) 
\end{eqnarray*}
and
\begin{eqnarray*}
L_{00} E_1 {\widehat \psi} &=& 
(L_{10}+ q E_1 L_{00}) {\widehat \psi}
\\
&=& 
 L_{10} {\widehat \psi}
-q a E_1 L_{01} 
\end{eqnarray*}
and
\begin{eqnarray*}
L_{00}K_1 = K_1L_{00},
\qquad \qquad 
L_{00}K_0 = K_0L_{00}.
\end{eqnarray*}
To verify (\ref{eq:want}), evaluate
its left-hand side 
using the above comments and simplify
the result using 
$K_0K_1=1$.
The computation is routine, and omitted.
We have shown that
${\widehat \psi}$ satisfies (\ref{eq:psiVSR}).
The result follows.
\end{proof}

\section{Acknowledgment} The author thanks
Sarah Bockting-Conrad and Edward Hanson for giving this paper
a close reading and offering valuable suggestions.
The author also thanks
Pascal Baseilhac for many  conversations  
concerning quantum groups, $L$-operators, and tridiagonal pairs.

\noindent Paul Terwilliger \hfil\break
\noindent Department of Mathematics \hfil\break
\noindent University of Wisconsin \hfil\break
\noindent 480 Lincoln Drive \hfil\break
\noindent Madison, WI 53706-1388 USA \hfil\break
\noindent email: {\tt terwilli@math.wisc.edu }\hfil\break

\end{document}